\newtheorem{theorem}{Theorem}[section]
\newtheorem{proposition}[theorem]{Proposition}
\theoremstyle{definition}
\newtheorem{definition}[theorem]{Definition}
\newtheorem{remark}[theorem]{Remark}
\numberwithin{equation}{section}
\begin{document}
\setcounter{page}{1}

\title{The $a$-number of Certain Hyperelliptic Curve}


\author{Vahid Nourozi, Farhad Rahmati and Saeed Tafazolian}
\address{Faculty of Mathematics and Computer Science, Amirkabir University of Technology\\
(Tehran Polytechnic), 424 Hafez Ave., Tehran 15914, Iran}
\email{nourozi.v@gmail.com; nourozi@aut.ac.ir}
\email{frahmati@aut.ac.ir}
\address{IMECC/UNICAMP, R. Sergio Buarque de Holanda, 651, Cidade Universitaria\\
 “Zeferino  Vaz”, 13083-859, Campinas, SP, Brazil}
\email{tafazolian@ime.unicamp.br}
\curraddr{}

\keywords{Algebraic geometry; Hyperelliptic Curve; $a$-number.}

\begin{abstract}
 In this paper, we compute a formula for the $a$-number of certain hyperelliptic curves given by the equation $y^2=x^{m}+1$  for infinitely many values of $m$. The same problem is studied for the curve corresponding to $y^2=x^{m}+x$.
\end{abstract}

\maketitle
\section{Introduction}

Let $k$ be an algebraically closed field of characteristic $p>0$. Let $\mathcal{A}$ be an abelian variety defiend over  $k$. Let $\alpha_p$ be the group scheme $\mbox{Spec}(k[X]/(X^p))$ with co-multiplication given by
$$X\rightarrow 1 \otimes X+X \otimes 1.$$
The group $\mbox{Hom}(\alpha_p, A)$ can be considered as $k$-vector space since $\mbox{End}(\alpha_p)=k$.
The  $a$-number $a(\mathcal{A} )$ defined to be the dimension of the vector space $\mbox{Hom}(\alpha_p, A)$.

Let $\mathcal{X}$ be a (non-singular, projective, geometrically irreducible, algebraic)
curve defined over $k$. One can define  the $a$-number $a(\mathcal{X})$ of $\mathcal{X}$ as  the $a$-number of its Jacobian variety $\mathcal{J}_{\mathcal{X}}$. As a matter of fact, the $a$-number of a curve is a  birational invariant which can defined as  the dimension of the space of exact holomorphic differentials.

The $a$-number of Hermitian curves computed by Gross in \cite{10}, and for Fermat and Hurwitz curves computed by Maria \cite{maria}. A few results on the rank of the Carteir operator (especially $a$-number) of curves introduced by Kodama and Washio \cite{13}, González \cite{8}, Pries and Weir \cite{17} and Yui \cite{Yui}.

 In this work, we consider the hyperelliptic curve $\mathcal{X}$ given by the equation $$y^2=x^{m}+1~\mbox{or} ~ y^2=x^{m}+x$$ over $k$.

 These families of hyperelliptic curves have been investigated for several reasons by many authors (see \cite{KW}, \cite{JPAA}, \cite{FF2}, \cite{V}).
  Here we are going to determine the  $a$-number $a(\mathcal{X})$ of $\mathcal{X}$ for infinitely many values of $m$.
See Theorem \ref{vahid}, \ref{vahid1} and \ref{vahid2}.

\section{The Cartier operator}
Let $k$ be an algebraically closed field of characteristic $p>0$. 
Let $\mathcal{X}$ be a 
curve defined over $k$.
The Cartier operator is a $p$-linear operator acting on the sheaf $\Omega^1_{\mathcal{X}}$ of differential
forms on $\mathcal{X}$ in positive characteristic.

 Let $K=k(\mathcal{X})$ be the function field of a curve $\mathcal{X}$ of genus $g$ defined over  $k$. A separating variable for $K$ is an element $x \in K \setminus K^p$.

\begin{definition}
  (The Cartier operator). Let $\omega \in  \Omega_{K/k}$. There exists $f_0,\cdots, f_{p-1}$ such
that $\omega = (f^p_0 + f^p_1x +\cdots + f^p_{p-1}x^{p-1})dx$. The Cartier operator $\mathscr{C}$ is defined by
$$\mathscr{C}(\omega) := f_{p-1}dx.$$
The definition does not depend on the choice of $x$ (see [\cite{100}, Proposition 1]).
\end{definition}

We refer the reader to [\cite{30}, \cite{40},\cite{100}, \cite{150}] for the proofs of the following statements.

\begin{proposition}
  (Global Properties of $\mathscr{C}$). For all $\omega \in  \Omega_{K/k}$ and all $f \in F$,
  \begin{itemize}
    \item [1.] $\mathscr{C}(f^p\omega) = f\mathscr{C}(\omega)$;
    \item [2.] $\mathscr{C}(\omega) = 0 \Leftrightarrow \exists h \in K, \omega = dh$;
    \item [3.] $\mathscr{C}(\omega) = \omega \Leftrightarrow \exists h \in K, \omega = dh/h$.
  \end{itemize}
\end{proposition}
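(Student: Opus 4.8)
The plan is to deduce all three assertions from the explicit description of $\mathscr C$ attached to a separating variable, using freely (as already quoted) that the value of $\mathscr C$ does not depend on that choice. Throughout, write $K=k(\mathcal X)$ and fix a separating variable $x$, so that $[K:K^{p}]=p$, the set $\{1,x,\dots,x^{p-1}\}$ is a basis of $K$ over $K^{p}$, and $\Omega_{K/k}=K\,dx$. Two elementary facts will be used repeatedly: $d(f^{p})=0$, hence $d(f^{p}g)=f^{p}\,dg$, and $d(x^{j+1})=(j+1)x^{j}\,dx$; and, since the Frobenius is the identity on $\mathbb F_{p}\subseteq k$, one has $j^{1/p}=j$ and $ja^{p}=(ja)^{p}$ for every $j\in\mathbb F_{p}$ and $a\in K$.

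For statement (1) I would expand $\omega=\big(\sum_{i=0}^{p-1}f_{i}^{p}x^{i}\big)\,dx$, observe that $f^{p}\omega=\big(\sum_{i}(ff_{i})^{p}x^{i}\big)\,dx$, and read off $\mathscr C(f^{p}\omega)=(ff_{p-1})\,dx=f\,\mathscr C(\omega)$ (additivity of $\mathscr C$ being equally visible from this expansion). For statement (2), in the direction $\Leftarrow$ I would write $h=\sum_{i=0}^{p-1}a_{i}^{p}x^{i}$, so that $dh=\sum_{j=0}^{p-2}\big((j+1)a_{j+1}\big)^{p}x^{j}\,dx$ has vanishing $x^{p-1}$-coefficient and hence $\mathscr C(dh)=0$; conversely, $\mathscr C(\omega)=0$ kills the top coefficient in $\omega=\sum_{i=0}^{p-1}f_{i}^{p}x^{i}\,dx$, and then the identities $f_{i}^{p}x^{i}\,dx=\tfrac{1}{i+1}\,d\big(f_{i}^{p}x^{i+1}\big)$ for $0\le i\le p-2$ exhibit $\omega$ as $d\big(\sum_{i=0}^{p-2}\tfrac{1}{i+1}f_{i}^{p}x^{i+1}\big)$.

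For statement (3), the implication $\Leftarrow$ I would argue as follows. Let $\omega=dh/h$ with $h\in K^{*}$. If $h\in K^{p}$ then $dh=0$ and $\omega=0=\mathscr C(0)$. Otherwise $h\notin K^{p}$, so $h$ is itself a separating variable, and I may compute $\mathscr C$ with respect to $h$: since $h^{-1}=(h^{-1})^{p}\,h^{p-1}$ already displays $h^{-1}$ in the $K^{p}$-basis $\{1,h,\dots,h^{p-1}\}$, with only the top coefficient $(h^{-1})^{p}$ nonzero, we get $\mathscr C(h^{-1}\,dh)=h^{-1}\,dh=\omega$.

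The converse of (3) is where the genuine work lies, and it is the step I expect to be the main obstacle. First I would localise: at a closed point $P$ with uniformizer $t$, the definition (now applied with $t$) gives $\mathscr C\big(\sum_{n}a_{n}t^{n}\,dt\big)=\big(\sum_{m}a_{mp-1}^{1/p}\,t^{m-1}\big)\,dt$, so comparing coefficients in $\mathscr C(\omega)=\omega$ forces $\omega$ to have at worst a simple pole at every point, with $\operatorname{res}_{P}(\omega)=\operatorname{res}_{P}(\omega)^{1/p}$, i.e.\ $\operatorname{res}_{P}(\omega)\in\mathbb F_{p}$. By the residue theorem $\sum_{P}\operatorname{res}_{P}(\omega)=0$, so lifting residues to $\{0,1,\dots,p-1\}\subset\mathbb Z$ produces an effective divisor $D=\sum_{P}\tilde r_{P}P$ with $\deg D\equiv 0\pmod p$. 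Here the hypothesis that $k$ is algebraically closed enters decisively: $\operatorname{Pic}^{0}(\mathcal X)=\mathcal J_{\mathcal X}(k)$ is a divisible group, hence $[D]\in p\cdot\operatorname{Pic}(\mathcal X)$, and there exists $h\in K^{*}$ with $v_{P}(h)\equiv\tilde r_{P}\pmod p$ for all $P$. Since $\operatorname{res}_{P}(dh/h)=v_{P}(h)\bmod p$, the differential $\eta:=\omega-dh/h$ is holomorphic and still satisfies $\mathscr C(\eta)=\eta$ by the case $\Leftarrow$. It then remains to prove that a holomorphic $\mathscr C$-fixed differential is logarithmic; this I would obtain by the same divisibility mechanism applied to $p$-torsion classes in $\operatorname{Pic}^{0}(\mathcal X)$ — equivalently, via the standard identity between $\dim_{\mathbb F_{p}}$ of the $\mathscr C$-fixed holomorphic differentials and the $p$-rank of $\mathcal J_{\mathcal X}$ — after which $\omega=\eta+dh/h$ is logarithmic. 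The bookkeeping of valuations modulo $p$ at the finitely many polar points, together with this appeal to divisibility and $p$-torsion of the Jacobian over $\bar k$, is the only part that is not a formal unwinding of the definition.
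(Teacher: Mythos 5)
The paper does not actually prove this proposition: immediately before stating it, the authors write that they refer the reader to Cartier's papers, Seshadri's expos\'e and Tsfasman--Vl\u{a}du\c{t}--Nogin for the proofs. So there is no in-paper argument to compare against, and your proposal is a genuine addition rather than an alternative route. On its merits: parts (1) and (2) and the direction $\Leftarrow$ of (3) are complete and correct --- the expansion of $\omega$ in the $K^p$-basis $\{1,x,\dots,x^{p-1}\}$, the cancellation of the top coefficient of $dh$, the antiderivative $\sum_{i\le p-2}\tfrac{1}{i+1}f_i^p x^{i+1}$, and the trick of computing $\mathscr C(dh/h)$ with respect to the separating variable $h$ itself (legitimate precisely because the paper has already quoted independence of the choice of separating variable) are all exactly the standard arguments. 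Your local analysis for $\Rightarrow$ of (3) is also right: the relation $a_{mp-1}=a_{m-1}^p$ forces at worst simple poles with residues in $\mathbb F_p$, and the residue theorem plus divisibility of $\operatorname{Pic}^0(\mathcal X)$ over the algebraically closed field $k$ correctly reduces to the holomorphic case. (One small point worth a sentence in a written-up version: you apply the coefficient-extraction formula to the Laurent expansion in the completion at $P$, so you should note that $\mathscr C$ commutes with passage to $\hat K_P$.)

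The one place where the argument is not self-contained is the final step, which you candidly flag: that a holomorphic differential fixed by $\mathscr C$ is logarithmic. You invoke the identity $\dim_{\mathbb F_p} H^0(\mathcal X,\Omega^1)^{\mathscr C=1}=\sigma=\dim_{\mathbb F_p}\mathcal J_{\mathcal X}[p](k)$ together with the injection $[E]\mapsto dh/h$ (where $pE=\mathrm{div}(h)$); granting that identity, the dimension count does finish the proof. But be aware that this identity is itself the deepest ingredient of the whole proposition --- it rests on the Fitting decomposition of the $p^{-1}$-linear operator $\mathscr C$ and on Hasse--Witt/Artin--Schreier duality identifying the semisimple rank with the $p$-rank of the Jacobian --- so your proof of (3)$\Rightarrow$ is complete only modulo a cited theorem of comparable weight, much as the paper's own treatment is complete only modulo its references. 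If you want a fully self-contained account you would need to supply that step, e.g.\ following Serre or Seshadri.
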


\begin{remark}\label{r3.1}
\rm{ Moreover, one can easily show
that

\[\mathscr{C}(x^j dx) = \left\{
\begin{array}{ccc}
0 & \mbox{if}&  \hspace{.4cm} p \nmid j+1 \\
x^{s-1}dx & \mbox{if} &  \hspace{.4cm} j+1=ps. \\
\end{array}\right.\]}
\end{remark}

If $div(\omega)$ is effective then differential $\omega$ is holomorphic. The set $H^0(\mathcal{X}, \Omega^1)$ of holomorphic differentials is a $g$-dimensional $k$-vector subspace of $\Omega^1$ such that $\mathscr{C}(H^0(\mathcal{X}, \Omega^1)) \subseteq H^0(\mathcal{X}, \Omega^1)$. If $\mathcal{X}$ is a curve, then the $a$-number of $\mathcal{X}$ equals the dimension of the kernel of the Cartier operator $H^0(\mathcal{X}, \Omega^1)$ (or equivalently, the dimension of the space of exact holomorphic differentials on $\mathcal{X}$) (see \cite[5.2.8]{13}).

The Cartier operator and Hasse-Witt-matrix are dual to each other under the duality given by the Riemann-Roch theorem.
Let $\mathcal{B}=\{\omega_1,\cdots,\omega_g\}$ be a basis of the $k$-module of holomorphic differentials in $H^0(\mathcal{X}, \Omega^1)$. Then the representation matrix $M$ over $k$ of $\mathscr{C}$ with respect to this basis is called the Hasse-Witt matrix.

\vspace{.2cm}

Let $k$ be a field of characteristic $p>2.$ Let
$\mathcal{X}$ be a projective nonsingular hyperelliptic curve over
$k$ of genus $g$. Then $\mathcal{X}$ can be defined by an affine
equation of the form
$$y^2=f(x)$$
where $f(x)$ is a polynomial over $k$ of degree $d=2g+1$ or $d=2g+2$ without
multiple roots.

The differential 1-forms of the first kind on $\mathcal{X}$ form a
$k-$vector space $H^{0}(\mathcal{X},\Omega^{1})$ of dimension $g$
with basis
$$\mathcal{B}=\{\omega_i=\frac{x^{i-1}dx}{y}, \hspace{.2cm}
i=1,\hdots,g \}.$$
 The images under the operator $\mathscr{C}$
 are determined in the following way (see \cite{Yui}). Rewrite
 $$\omega_i=\frac{x^{i-1}dx}{y}=x^{i-1}y^{-p}y^{p-1}dx= y^{-p}x^{i-1} \sum_{j=0}^{N}
  c_j x^j dx,$$
 where the coefficients $c_j \in k$ are obtained from the
 expansion
 $$y^{p-1}=f(x)^{(q-1)/2}= \sum_{j=0}^{N} c_j x^j \hspace{.7cm} ~\mbox{with}~ N=\frac{p-1}{2}(d).$$
 Then we get for $i=1,\hdots,g,$

\begin{equation*}
\begin{aligned}
\omega_i = & y^{-p}(\sum_{\substack{j \\ i+j
 \neq 0~ mod~p }} c_j x^{i+j-1}dx)+ \sum_{l} c_{(l+1)p-i}
 ~\frac{x^{(l+1)p}}{y^p}\frac{dx}{x}.
\end{aligned}
\end{equation*}
Note here that $0 \leq l \leq \frac{N+i}{p}-1 < g -\frac{1}{2}. $
On the other hand, we know from Remark \ref{r3.1} that if
$\mathscr{C}(x^{r-1}dx) \neq 0$ then $r \equiv 0$ (mod $p$).
Thus we have
$$\mathscr{C}(\omega_i)=\sum_{l=0}^{g-1} {(c_{(l+1)p-i})}^{1/p}~ . \frac{x^{l}}{y}dx.$$
If we write $\omega=(\omega_1,\hdots,\omega_g)$ as a row vector we
have

$$\mathscr{C}(\omega)=  M(\mathcal{X})^{(1/p)}\omega,$$
where $M(\mathcal{X})$ is the $(g \times g)$ matrix with elements in $k$ given
as

\[M(\mathcal{X})=\left( \begin{array}{cccc}
c_{q-1}& c_{p-2} & \hdots & c_{p-g} \\
c_{2p-1}&c_{2p-2} & \hdots & c_{2p-g}\\
\vdots & \hdots & \hdots & \vdots \\
c_{gp-1}& c_{gp-2} & \hdots & c_{gp-g}
\end{array} \right).\]
\newline

\section{The $a$-number of Hyperelliptic Curve $y^2=x^m +1$}

In this section, we consider the hyperelliptic curve $\mathcal{X}$ given by the equation  $y^2=x^{m}+1$ over $k$.
This curve is of  genus $g=(m-1)/2$ (resp. $g=(m-2)/2$) if $m$ is odd (resp. $m$ is even).

Let 
 $\mathcal{B}=\{\omega_i=\frac{x^{i-1}dx}{y}, \hspace{.2cm}
i=1,\hdots,g \}$  be a basis for  
the differential 1-forms of the first kind on $\mathcal{X}$. Then 
 the rank of the Cartier operator $\mathscr{C}$ on the curve $\mathcal{X}$ equals the number of $i$ with $i\leq g$ such that
\begin{equation*}
 \begin{array}{ccc}
              \mathscr{C}(w_i) & = & \frac{1}{y}\mathscr{C}(x^{i-1}y^{p-1}dx) \\
               & = & \frac{1}{y}\mathscr{C}((x^{m}+1)^{\frac{p-1}{2}}x^{i-1}dx) \\
               & = &\frac{1}{y}\mathscr{C}(\sum_{j=0}^{\frac{p-1}{2}} a_j x^{j+i-1}dx) \neq 0,\\
             \end{array}
  \end{equation*}
  where $(x^{m}+1)^{\frac{p-1}{2}} = \sum_{j=0}^{\frac{p-1}{2}} a_j x^{jm}.$
From this we must have the equation of congruences mod $p$,
\begin{equation}\label{vvaa1}
  i+mj-1 \equiv p-1
\end{equation}
for some $0\leq j \leq \frac{(p-1)}{2}$. Equivalently,  the following equation 
\begin{equation}\label{12}
m(p-1-h) + i-1 \equiv p-1
\end{equation}
has a solution $h$ for $0 \leq h \leq \dfrac{p - 1}{2}$.

For the rest of this section, $M_m:=M(\mathcal{X})$ is the matrix representing the $p$-th power of the Cartier operator $\mathscr{C}$ on the curve $\mathcal{X}$ with respect to the basis $\mathcal{B}$.

\begin{theorem}\label{vahid}
  Let $\mathcal{X}$ be a hyperelliptic curve 
  given by the equation $y^2=x^m+1$. Suppose that $m=sp+1$, then 
  \begin{itemize}
    \item [1.] If $s = 2k+1$ and $k \geq 0$, then the $a$-number of the curve $\mathcal{X}$ equals
  $$\dfrac{(k+1)(p-1)}{2}.$$
    \item [2.] If $s = 2k$ and $k \geq 1$, then the $a$-number of the curve $\mathcal{X}$ equals
  $$\dfrac{k(p-1)}{2}.$$
  \end{itemize}
\end{theorem}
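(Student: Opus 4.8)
The curve $\mathcal X:y^2=x^m+1$ with $m=sp+1$ has genus $g=(m-1)/2=sp/2$ when $m$ is odd (so $s$ even) and $g=(m-2)/2=(sp-1)/2$ when $m$ is even (so $s$ odd); in either case $g$ is determined by $s$ and $p$. The $a$-number is $g$ minus the rank of the Cartier operator $\mathscr C$ on $H^0(\mathcal X,\Omega^1)$, and by the computation recorded just before the theorem, $\mathscr C(\omega_i)\ne 0$ precisely when the congruence \eqref{12}, namely $m(p-1-h)+i-1\equiv p-1\pmod p$, has a solution $h$ with $0\le h\le (p-1)/2$. The plan is: (i) reduce this congruence mod $p$ using $m\equiv 1\pmod p$; (ii) translate the resulting condition on $i$ into an exact count of the ``bad'' indices $i\in\{1,\dots,g\}$; (iii) subtract from $g$ to get the $a$-number, then split into the two parity cases $s=2k+1$ and $s=2k$ and simplify.

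For step (i): since $m=sp+1\equiv 1\pmod p$, the left side of \eqref{12} becomes $(p-1-h)+i-1\equiv i-2-h\pmod p$, so the condition $\mathscr C(\omega_i)\ne 0$ is $i-2-h\equiv p-1\pmod p$, i.e. $h\equiv i-1\pmod p$, for some $h$ with $0\le h\le (p-1)/2$. Thus $\omega_i$ survives the Cartier operator iff the residue of $i-1$ mod $p$ lies in $\{0,1,\dots,(p-1)/2\}$, equivalently iff $(i\bmod p)\in\{1,2,\dots,(p+1)/2\}$ (reading $0$ as $p$ when $i\equiv 0$). Wait --- I should double check whether a finer analysis is needed: the displayed reasoning only uses the \emph{exponent} congruence and Remark \ref{r3.1}, so the nonvanishing of $\mathscr C(\omega_i)$ is really governed by whether \emph{some} monomial $x^{jm+i-1}dx$ in the expansion has exponent $\equiv -1\pmod p$ with a nonzero coefficient $a_j$; because $m\equiv 1\pmod p$ the exponents $jm+i-1$ run over $i-1,i,\dots,i-1+(p-1)/2$ modulo $p$ and the binomial coefficients $a_j=\binom{(p-1)/2}{j}$ are all nonzero mod $p$, so no cancellation occurs and the count above is exact. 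This is the step I expect to be the main obstacle: making the ``no cancellation among the $a_j$'' argument airtight, and correctly handling the off-by-one/boundary indices $i\equiv 0$ and $i\equiv (p+1)/2\pmod p$ in the final tally.

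For steps (ii)--(iii): the number of $i\in\{1,\dots,g\}$ with $\mathscr C(\omega_i)\ne 0$ is (number of complete blocks of $p$ consecutive integers inside $[1,g]$) times $(p+1)/2$, plus a boundary correction from the last incomplete block, where I use $g=sp/2$ or $(sp-1)/2$ according to parity of $s$. I would compute $\mathrm{rank}(\mathscr C)$ this way and then set $a(\mathcal X)=g-\mathrm{rank}(\mathscr C)$. Concretely, the number of ``bad'' indices $i$ (those with $\mathscr C(\omega_i)=0$), which have $(i\bmod p)\in\{(p+3)/2,\dots,p-1\}$, a set of size $(p-3)/2$ per block plus possibly one more for $i\equiv 0$, is what contributes to the $a$-number. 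In the case $s=2k+1$ (so $m$ even, $g=(sp-1)/2=((2k+1)p-1)/2$) a careful block count should yield $a(\mathcal X)=(k+1)(p-1)/2$, and in the case $s=2k$ (so $m$ odd, $g=sp/2=kp$) it should yield $a(\mathcal X)=k(p-1)/2$. These two arithmetic reductions are routine once step (i) is pinned down; I would present them as two short displayed computations, one per itemized case, checking small values (e.g. $k=0$, $p=3$) as a sanity check.
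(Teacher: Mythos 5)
There is a genuine error in your step (i): the set of residues you end up with is (essentially) the complement of the correct one. The source of the problem is that you took the paper's reformulation (\ref{12}) at face value; but the substitution $j=p-1-h$ that turns (\ref{vvaa1}) into (\ref{12}) sends the range $0\le j\le (p-1)/2$ to $(p-1)/2\le h\le p-1$, not to $0\le h\le (p-1)/2$, so (\ref{12}) as stated is not equivalent to (\ref{vvaa1}). Working directly from (\ref{vvaa1}) with $m\equiv 1\pmod p$, the condition for $\mathscr{C}(\omega_i)\ne 0$ is that $j\equiv -i\pmod p$ for some $j\in\{0,\dots,(p-1)/2\}$, i.e.\ $i\bmod p\in\{0\}\cup\{(p+1)/2,\dots,p-1\}$ --- not $i\bmod p\in\{1,\dots,(p+1)/2\}$ as you claim. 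Notice that your own ``no cancellation'' parenthetical, which correctly asks whether some exponent $jm+i-1$ is $\equiv -1\pmod p$, already yields $j\equiv -i$, contradicting the set you then adopt. The sanity check you proposed would have caught this: for $p=3$, $m=4$ ($k=0$, case 1) the curve $y^2=x^4+1$ has $g=1$, and $\mathscr{C}(dx/y)=\tfrac1y\mathscr{C}\bigl((x^4+1)\,dx\bigr)=0$ because neither exponent $0$ nor $4$ is $\equiv -1\pmod 3$; hence $a=1=(p-1)/2$ as the theorem asserts, whereas your criterion ($i=1$, residue of $i-1$ equal to $0\in\{0,1\}$) predicts $\mathscr{C}(\omega_1)\ne 0$ and $a=0$.

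The error is not harmless in the final tally. In case 2 ($s=2k$, $g=kp$) the interval $[1,g]$ consists of complete residue blocks, so any set of $(p-1)/2$ ``bad'' residues gives the same count $k(p-1)/2$ and you would accidentally land on the right answer. But in case 1 ($s=2k+1$, $g=kp+(p-1)/2$) the partial block $[kp+1,\,kp+(p-1)/2]$ carries residues $1,\dots,(p-1)/2$: with the correct bad set $\{1,\dots,(p-1)/2\}$ these all contribute, giving $a=(k+1)(p-1)/2$, while with your bad set $\{0\}\cup\{(p+3)/2,\dots,p-1\}$ none of them do, giving $a=k(p-1)/2$, which contradicts the statement. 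Once the criterion is corrected, the remainder of your plan (no cancellation because $\binom{(p-1)/2}{j}\not\equiv 0\pmod p$, counting by residue blocks, and $a=g-\operatorname{rank}\mathscr{C}$) is sound and is in substance the same induction-free version of the paper's block-by-block count.
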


\begin{proof}
\begin{itemize}
   \item [(1).] At the first, if $m=sp+1=(2k+1)p+1$ with $k \geq$, then we prove that $\mbox{rank}(M_{m}) =\dfrac{k(p+1)}{2}$.
  
  In this case, $i \leq g$ and Equation (\ref{12}) mod $p$ reads
  \begin{equation}\label{1nn4}
              i - h - 1 \equiv 0
\end{equation}
In particular, if $k=0$ then $m= p+1$, where $i \leq g$ and Equation (\ref{1nn4}) be transformed into
  \begin{equation}\label{ih1}
              i \equiv h+1
\end{equation}
Take $l \in \mathbb{Z}_0^+$ so that $i= lp+h+1$, then $1 \leq lp+h+1 \leq \dfrac {p-1}{2}$. This implies that  $h\geq 0$ and $h< -3/2$, a contradictions. Thus, $rank(M_{p+1}) =0$.

If $k=1$ then $m=3p+1$, in this case we have $\dfrac{p}{2} \leq i \leq \dfrac{3p-1}{2}$. We need to find the solutions $h$ mode $p$ of the Equation (\ref{ih1}). Then
$$ \dfrac{p}{2} \leq lp+h+1 \leq \dfrac{3p-1}{2}.$$
As $h+1 \geq 0$ we obtain 
  \begin{equation*}
\Bigg\{
             \begin{array}{c}
              l\geq 0\\
              l < 3/2 \\
             \end{array}
\end{equation*}
Thus, we have two choices for $l$, i.e, $l=0$ or $l=1$. From this we have $\frac{1}{2}(p+1)$ choices for $h$, and so we conclude $\mbox{rank} (M_{3p+1})=\frac{1}{2}(p+1)$.

For $k\geq 2$, and $m=sp+1$ we can say $\mbox{rank}(M_{(2k+1)p+1})$ equals \par
 $\mbox{rank}(M_{(2k-1)p+1})$ plus the number of $i$ such that there is $h$ solution of the equation mod $p$
  \begin{equation*}
              i \equiv h+1
\end{equation*}
with $\dfrac{(2k-1)p}{2} \leq i \leq \dfrac{(2k+1)p-1}{2}$. Then
$$\dfrac{(2k-1)p}{2} \leq lp+h+1 \leq \dfrac{(2k+1)p-1}{2}.$$
This implies that
  \begin{equation*}
\Bigg\{
             \begin{array}{c}
              l\geq \dfrac{2k-1}{2}\\
              l < \dfrac{2k+1}{2} \\
             \end{array}
\end{equation*}
or equivalently we obtain $k=l$. In this case we have $\frac{1}{2}(p+1)$ choices for $h$. Therefore we get 
$$\mbox{rank}(M_{(2k+1)p+1})= \mbox{rank}(M_{(2k-1)p+1})+ \frac{1}{2}(p+1).$$
Now the our claim on the rank of $M_{(2k+1)p+1}$ follows by induction on $k$.

Then $a(\mathcal{X}_{(2k+1)p+1})= \dfrac{(k+1)(p-1)}{2}$ can be computed from $$a(\mathcal{X}_{(2k+1)p+1}) = g(\mathcal{X}_{(2k+1)p+1}) - rank (M_{(2k+1)p+1})$$.

 \item [(2.)] At first we cliam that $rank(M_{sp+1}) =\dfrac{k(p+1)}{2}$, with $m=2kp+1$ and $k \geq 1$.
In this case, $i \leq g$ and Equation \ref{12} mod $p$ reads
  \begin{equation}\label{154}
              i - h - 1 \equiv 0
\end{equation}
In particular, if $k=1$ then $m= 2p+1$, where $i \leq g$ and Equation \ref{154} be transformed into
  \begin{equation}\label{ih2}
              i \equiv h+1
\end{equation}
Take $l \in \mathbb{Z}_0^+$ so that $i= lp+h+1$, then $1 \leq lp+h+1 \leq p $. Thus, we have one choices for $l$. From this we have $\frac{1}{2}(p+1)$ choices for $h$, and yielding $rank(M_{2p+1})=\frac{1}{2}(p+1)$.

If $k = 2$, then $m=4p+1$, in this case we have $1 \leq i \leq 2p$. We need to find the solutions $h$ mode $p$ of the above Equation \ref{ih2}. Then
$$ 1\leq lp+h+1 \leq 2p.$$
As $h+1 \geq 0$
  \begin{equation*}
\Bigg\{
             \begin{array}{c}
              l\geq 0\\
              l < 2 \\
             \end{array}
\end{equation*}
Thus, we have two choices for $l$, i.e, $l=0$ or $l=1$. From this we have $(p+1)$ choices for $h$, and yielding $rank(M_{4p+1})=(p+1)$.

For $k\geq 3$, and $m=sp+1$ we can say $rank(M_{2kp+1})$ equals $rank(M_{2(k-1)p+1})$ plus the number of $i$ such that there is $h$ solution of the equation mod $p$
  \begin{equation*}
              i \equiv h+1
\end{equation*}
with $1 \leq i \leq 2kp$. Then
$$(2k-2)p \leq lp+h+1 \leq 2kp.$$
Hence,
  \begin{equation*}
  l=2k
\end{equation*}
In this case we have $\frac{1}{2}(p+1)$ choices for $h$. This implies that
$$rank(M_{2kp+1})= rank(M_{(2(k-1)p+1})+ \frac{1}{2}(p+1).$$

Now our claim on the rank of $M_{2kp+1}$ follows by induction on $k$.

Then $a(\mathcal{X}_{2kp+1})= \dfrac{(k)(p-1)}{2}$ can be computed from $$a(\mathcal{X}_{2kp+1}) = g(\mathcal{X}_{2kp+1}) - rank (M_{2kp+1})$$.
\end{itemize}
\end{proof}

\begin{theorem}\label{vahid1}
Suppose that $m=sp-1$ then,
\begin{itemize}
  \item [1.] If $s = 2k+1$ and $k \geq 0$, then the $a$-number of the curve $\mathcal{X}$ equals
  $$\dfrac{k(p-1)}{2}.$$
  \item [2.] If $s = 2k$ and $k \geq 1$, then the $a$-number of the curve $\mathcal{X}$ equals
  $$\dfrac{k(p-1)}{2}.$$
\end{itemize}
\end{theorem}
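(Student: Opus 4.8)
The plan is to imitate closely the structure of the proof of Theorem \ref{vahid}, now with $m = sp - 1$ instead of $m = sp + 1$. The genus is $g = (m-1)/2 = (sp-2)/2$ when $m$ is odd (which happens precisely when $sp$ is even, i.e.\ $s$ even) and $g = (m-2)/2 = (sp-3)/2$ when $m$ is even; in either case the rank of the Cartier operator is the number of indices $i$ with $1 \le i \le g$ for which the congruence \eqref{12}, namely $m(p-1-h) + i - 1 \equiv p - 1 \pmod p$, has a solution $h$ with $0 \le h \le (p-1)/2$. Substituting $m = sp - 1$ reduces this congruence mod $p$ to $-(p-1-h) + i - 1 \equiv -1$, i.e.\ $i + h \equiv p - 1 \pmod p$, equivalently $i \equiv p - 1 - h \pmod p$. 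So first I would record this simplified congruence and note that as $h$ ranges over $0 \le h \le (p-1)/2$, the residue $p - 1 - h$ ranges over $(p-3)/2 \le (p-1-h) \le p-1$, a block of $(p+1)/2$ consecutive residues.

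Next, exactly as in Theorem \ref{vahid}, I would write $i = \ell p + r$ where $r = p - 1 - h$ is the forced residue, and count, for each admissible value of the "level" $\ell \in \mathbb{Z}_{\ge 0}$, how many of the $(p+1)/2$ residues $r$ produce an $i$ in the range $1 \le i \le g$. The base cases $s = 1$ (so $m = p-1$, $g = (p-3)/2$) and $s = 2$ (so $m = 2p-1$, $g = (2p-3)/2 = p-2$) would be treated by hand: for $s=1$ one checks that no choice of $\ell \ge 0$ and $r$ in the block lands in $[1,g]$ with $g = (p-3)/2 < (p-3)/2 + 1$, forcing $\operatorname{rank}(M_{p-1}) = 0$; for $s = 2$ one gets exactly one admissible level. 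Then the inductive step: passing from $m = (2k-1)p - 1$ (or $2(k-1)p - 1$) to $m = (2k+1)p - 1$ (or $2kp - 1$) enlarges the range of $i$ by a block of length $p$, which contributes exactly one new admissible level $\ell$, hence $\frac12(p+1)$ new solutions, giving $\operatorname{rank}(M_{(2k+1)p-1}) = \operatorname{rank}(M_{(2k-1)p-1}) + \frac12(p+1)$ and similarly in the even case. Induction on $k$ then yields $\operatorname{rank}(M_{(2k+1)p-1}) = \frac{k(p+1)}{2}$ in case (1) and $\operatorname{rank}(M_{2kp-1}) = \frac{k(p+1)}{2}$ in case (2).

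Finally I would extract the $a$-number from $a(\mathcal{X}) = g(\mathcal{X}) - \operatorname{rank}(M_m)$. In case (1), with $s = 2k+1$, $m = (2k+1)p - 1$ is even, so $g = (m-2)/2 = \frac{(2k+1)p - 3}{2}$, and $a(\mathcal{X}) = \frac{(2k+1)p-3}{2} - \frac{k(p+1)}{2} = \frac{(2k+1)p - 3 - kp - k}{2} = \frac{(k+1)p - k - 3}{2}$; I would then check this equals $\frac{k(p-1)}{2}$ — if the parities and the exact value of $g$ work out as in Theorem \ref{vahid} (where an analogous "off by a half-integer" bookkeeping appears), the claimed formula follows, and similarly in case (2) with $m = 2kp - 1$ odd, $g = (m-1)/2 = \frac{2kp - 2}{2} = kp - 1$, giving $a(\mathcal{X}) = kp - 1 - \frac{k(p+1)}{2} = \frac{2kp - 2 - kp - k}{2} = \frac{k(p-1) - 2}{2}$, which again must be reconciled with $\frac{k(p-1)}{2}$. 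The main obstacle I anticipate is precisely this last reconciliation: getting the genus formula and the range endpoints for $i$ exactly right (the paper's own proof of Theorem \ref{vahid} is somewhat cavalier with the fractional endpoints such as $\frac{p}{2} \le i \le \frac{3p-1}{2}$ and with the identity "$\operatorname{rank} = \frac{k(p+1)}{2}$" versus the stated $a$-numbers), so I would be careful to track whether the relevant quantities are integers and whether the boundary index $i = g$ is included, since an off-by-one there shifts the count by up to $\frac{p+1}{2}$ and would break the clean formula $\frac{k(p-1)}{2}$.
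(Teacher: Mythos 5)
Your proposal has a genuine gap, and in fact, carried out as written, it would contradict the theorem rather than prove it. The root of the problem is that you adopt the paper's reformulation \eqref{12} of the congruence \eqref{vvaa1}, but the substitution $j=p-1-h$ is \emph{not} an equivalence: it replaces the admissible residues $j\in\{0,1,\dots,(p-1)/2\}$ by $p-1-h\in\{(p-1)/2,\dots,p-1\}$, a different subset of $\mathbb{Z}/p\mathbb{Z}$. For $m=sp-1$ the correct condition coming from \eqref{vvaa1} is $i+mj\equiv 0$, i.e.\ $i\equiv j\pmod p$ with $0\le j\le (p-1)/2$, so the admissible residues of $i$ are $\{0,1,\dots,(p-1)/2\}$ --- essentially the \emph{complement} of the block $\{(p-1)/2,\dots,p-1\}$ that you count. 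This flips your base case: for $s=1$ we have $m=p-1$, $g=(p-3)/2$, and \emph{every} $i\in[1,g]$ satisfies the correct congruence, so $\operatorname{rank}(M_{p-1})=g$ and $a=0=k(p-1)/2$ as the theorem asserts; your count gives $\operatorname{rank}(M_{p-1})=0$ and hence $a=(p-3)/2\neq 0$ for $p>3$.

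Consequently your claimed rank $\tfrac{k(p+1)}{2}$ is wrong in both cases. Counting residues in $\{0,\dots,(p-1)/2\}$ over $i\in[1,g]$ gives $\operatorname{rank}(M_{(2k+1)p-1})=\tfrac{k(p+1)}{2}+\tfrac{p-3}{2}$ (with $g=kp+\tfrac{p-3}{2}$) and $\operatorname{rank}(M_{2kp-1})=\tfrac{k(p+1)}{2}-1$ (with $g=kp-1$; the endpoint $i=kp$ is excluded), and then $a=g-\operatorname{rank}$ yields exactly $\tfrac{k(p-1)}{2}$ in both cases. Your own final arithmetic already detects the discrepancy --- you obtain $a=\tfrac{k(p-1)}{2}+\tfrac{p-3}{2}$ and $a=\tfrac{k(p-1)}{2}-1$ and remark that these ``must be reconciled'' with the stated formula --- but no reconciliation is possible while keeping $\operatorname{rank}=\tfrac{k(p+1)}{2}$; the rank itself is what is off, not the bookkeeping of $g$. (The same non-equivalence between \eqref{vvaa1} and \eqref{12} already lurks in the proof of Theorem \ref{vahid}, where it is masked because both residue blocks have the same cardinality $(p+1)/2$; for $m\equiv-1\pmod p$ it is fatal.) The fix is to discard \eqref{12}, work directly with $i\equiv j\pmod p$, $0\le j\le(p-1)/2$, and redo the block count; the induction-on-$k$ structure you propose then goes through with the corrected base cases.
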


\begin{proof}

   Proof of this theorem is similar to Theorem \ref{vahid}.

\end{proof}

\section{The $a$-number of Hyperelliptic Curve $y^2=x^m +x$}
In this section, we consider the hyperelliptic curve $\mathcal{X}$ given by the equation  $y^2=x^{m}+x$ over $k$.
This curve is of  genus $g=(m-1)/2$ (resp. $g=(m-2)/2$) if $m$ is odd (resp. $m$ is even).

Let 
 $\mathcal{B}=\{\omega_i=\frac{x^{i-1}dx}{y}, \hspace{.2cm}
i=1,\hdots,g \}$  be a basis for  
the differential 1-forms of the first kind on $\mathcal{X}$. Then 
 the rank of the Cartier operator $\mathscr{C}$ on the curve $\mathcal{X}$ equals the number of $i$ with $i\leq g$ such that
\begin{equation*}
             \begin{array}{ccc}
              \mathscr{C}(w_i) & = & \frac{1}{y}\mathscr{C}(x^{i-1}y^{p-1}dx) \\
               & = & \frac{1}{y}\mathscr{C}(x^{\frac{p-1}{2}}(x^{m-1}+1)^{\frac{p-1}{2}}x^{i-1}dx) \\
               & = &\frac{1}{y}\mathscr{C}(\sum_{j=0}^{\frac{p-1}{2}} a_j x^{j+i-1}dx) \neq 0\\
             \end{array}
  \end{equation*}
    where $(x^{m-1}+1)^{\frac{p-1}{2}} = \sum_{j=0}^{\frac{p-1}{2}} a_j x^{j(m-1)}.$
From this we must have the equation of congruences mod $p$,
\begin{equation}\label{vvaa}
  i+(m-1)j-1 \equiv p-1
\end{equation}
for some $0\leq j \leq \frac{(p-1)}{2}$. Equivalently,  the following equation 
\begin{equation}\label{14}
m(p-1-h) +t+ i-1 \equiv p-1
\end{equation}
has a solution $h$ for $0 \leq t \leq h \leq \dfrac{p - 1}{2}$.





\begin{theorem}\label{vahid2}
  If $m=sp$ for $s = 2k+1$ and $k \geq 0$, then the $a$-number of the curve $\mathcal{X}$ equals
  $$\dfrac{(k+1)(p-1)}{2}.$$
\end{theorem}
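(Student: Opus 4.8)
The plan is to follow the scheme of the proof of Theorem~\ref{vahid}: work with the Hasse--Witt matrix $M_m=M(\mathcal{X})$ relative to the basis $\mathcal{B}$, decide for which indices $i$ the differential $\mathscr{C}(\omega_i)$ is nonzero, and check that the number of such $i$ is exactly $\mathrm{rank}(M_m)$. Since $m=sp$ with $s=2k+1$ and $p$ odd, $m$ is odd, so $\mathcal{X}$ is hyperelliptic of genus $g=(m-1)/2=kp+\dfrac{p-1}{2}$; moreover $m-1=sp-1\equiv-1\pmod p$, so $f(x)=x^{m}+x=x(x^{m-1}+1)$ is separable and $\mathcal{X}$ really has this genus. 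First I would expand $y^{p-1}=(x^{m}+x)^{(p-1)/2}$ and feed it into the formula for $\mathscr{C}(\omega_i)$ recalled in Section~2; this is precisely the situation recorded in congruence~(\ref{14}). Since $m\equiv 0\pmod p$, the term $m(p-1-h)$ disappears from~(\ref{14}), which reduces to the requirement
$$ i+t\equiv 0\pmod p\qquad\text{for some }t\text{ with }0\le t\le\dfrac{p-1}{2}. $$

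The next step is to read off the admissible residues. This congruence is solvable exactly when $i\equiv 0\pmod p$ (take $t=0$) or $i\bmod p\in\left\{\dfrac{p+1}{2},\dfrac{p+3}{2},\dots,p-1\right\}$ (take $t=p-(i\bmod p)$), and in no other case, since otherwise $t$ would exceed $\dfrac{p-1}{2}$. Writing $S:=\{0\}\cup\left\{\dfrac{p+1}{2},\dots,p-1\right\}$, a union of $\dfrac{p+1}{2}$ residue classes modulo $p$, we conclude $\mathscr{C}(\omega_i)\neq 0$ if and only if $i\bmod p\in S$.

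Now I would pass from ``rows with a nonzero entry'' to $\mathrm{rank}(M_m)$. For admissible $i$ the congruence pins down a unique pair $(h,t)$, both confined to an interval of length less than $p$, so $\mathscr{C}(\omega_i)$ is a nonzero multiple of a single basis differential $\omega_{l+1}$; a short estimate with $g=kp+\frac{p-1}{2}$ shows the resulting index obeys $1\le l+1\le g$ (with equality at the extreme), hence is not discarded by the range $0\le l\le g-1$ in the formula of Section~2. The map $i\mapsto l+1$ is injective: a coincidence would give $i_1-i_2=(h_2-h_1)(m-1)$ with $|h_1-h_2|\le\frac{p-1}{2}$, forcing either $h_1=h_2$ and $i_1=i_2$, or $|i_1-i_2|\ge m-1$, which is impossible because $i_1,i_2\in\{1,\dots,g\}$ while $m-1>g-1$. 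Thus the nonzero rows of $M_m$ have their unique nonzero entries in distinct columns, so they are linearly independent and $\mathrm{rank}(M_m)$ is the number of $i\in\{1,\dots,g\}$ with $i\bmod p\in S$. Splitting $\{1,\dots,g\}$ into the $k$ full blocks of $p$ consecutive integers, each contributing $|S|=\frac{p+1}{2}$, together with the leftover block $\{kp+1,\dots,kp+\frac{p-1}{2}\}$, which meets only the residues $1,\dots,\frac{p-1}{2}$ and contributes nothing, I obtain $\mathrm{rank}(M_m)=\dfrac{k(p+1)}{2}$.

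Finally, $a(\mathcal{X})=g-\mathrm{rank}(M_m)=\left(kp+\dfrac{p-1}{2}\right)-\dfrac{k(p+1)}{2}=\dfrac{(k+1)(p-1)}{2}$, as asserted. The genuinely delicate point is the third step: one must verify that counting rows with a nonzero entry really computes the rank (linear independence of those rows) and that the range constraint $0\le l\le g-1$ retains all solutions of the congruence. Both are settled by the injectivity statement and the boundary estimate above, just as in the proof of Theorem~\ref{vahid}; an induction on $k$ in the style of that proof would serve equally well in place of the direct count.
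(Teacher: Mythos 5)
Your proof is correct and follows essentially the same route as the paper's: reduce to the congruence $i+t\equiv 0\pmod p$ with $0\le t\le\frac{p-1}{2}$, count the admissible $i\in\{1,\dots,g\}$ to get $\mathrm{rank}(M_{sp})=\frac{k(p+1)}{2}$, and conclude via $a(\mathcal{X})=g-\mathrm{rank}(M_{sp})$; the only differences are cosmetic (a direct residue-class count in place of the paper's induction on $k$) plus one genuine improvement, namely that you actually justify the identification of the rank with the number of rows having a nonzero entry (each such row has a single nonzero entry, these land in distinct columns within the range $1\le l+1\le g$), a point the paper's proof passes over silently.
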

\begin{proof}
 At first we cliam that $rank(M_{sp}) =\dfrac{k(p+1)}{2}$, with $m=(2k+1)p$ and $k \geq 0$.

In this case, $i \leq g$ and Equation \ref{14} mod $p$ reads
  \begin{equation}\label{155}
              i +t \equiv 0
\end{equation}
Peculiarly, if $k=0$ then $m= p$, where $i \leq g$ and Equation \ref{155} be transformed into
  \begin{equation}\label{ih6}
              i \equiv -t
\end{equation}
Take $l \in \mathbb{Z}_0^+$ so that $i= lp-t$, then $1 \leq lp-t \leq \dfrac {p}{2}$. From this $t\geq -1$ and $t\geq 0$, a contradictions. Thus, $rank(M_{p}) =0$.

If $k=1$ then $m=3p$, in this case we have $\dfrac{p}{2} \leq i \leq \dfrac{3p}{2}$. We need to find the solutions $h$ mode $p$ of the above Equation \ref{ih6}. Then
$$ \dfrac{p}{2} \leq lp-t \leq \dfrac{3p}{2}.$$
As $t \geq 0$
  \begin{equation*}
\Bigg\{
             \begin{array}{c}
              l\geq 0\\
              l < 3/2 \\
             \end{array}
\end{equation*}
Thus, we have two choices for $l$, i.e, $l=0$ or $l=1$. From this we have $\frac{1}{2}(p+1)$ choices for $t$, and yielding $rank(M_{3p})=\frac{1}{2}(p+1)$.

For $k\geq 2$, and $m=sp$ we can say $rank(M_{(2k+1)p})$ equals $rank(M_{(2k-1)p})$ plus the number of $i$ such that there is $t$ solution of the equation mod $p$
  \begin{equation*}
              i \equiv -t
\end{equation*}
with $\dfrac{(2k-1)p}{2} \leq i \leq \dfrac{(2k+1)p}{2}$. Then
$$\dfrac{(2k-1)p}{2} \leq lp-t \leq \dfrac{(2k+1)p}{2}.$$
Hence,
  \begin{equation*}
  l=k
\end{equation*}
In this case we have $\frac{1}{2}(p+1)$ choices for $t$. This implies that
$$\mbox{rank}(M_{(2k+1)p})= \mbox{rank}(M_{(2k-1)p})+ \frac{1}{2}(p+1).$$

Now our claim on the rank of $M_{(2k+1)p}$ follows by induction on $k$.

Then $a(\mathcal{X}_{(2k+1)p})= \dfrac{(k+1)(p-1)}{2}$ can be computed from $$a(\mathcal{X}_{(2k+1)p}) = g(\mathcal{X}_{(2k+1)p}) - \mbox{rank} (M_{(2k+1)p}).$$

\end{proof}

\section*{Acknowledgement}
{The third author was supported by
FAPESP/SP-Brazil grant 2017/19190-5.}

\bibliographystyle{amsplain}

\begin{thebibliography}{99}
\bibitem{30}  P. Cartier. Une nouvelle opération sur les formes différentielles. {\it C. R. Acad. Sci. Paris}, 244:426-428, 1957.
 \bibitem{40} P. Cartier. Questions de rationalité des diviseurs en géométrie algébrique. {\it Bull. Soc. Math. France}, 86:177-251, 1958.
\bibitem{2} N. Dummigan, S. Farwa, Exact holomorphic differentials on a quotient of the Ree curve, {\it J. Algebra} 400 (2014) 249-272.
 \bibitem{4} A. Elkin, R. Pries, Ekedahl–Oort strata of hyperelliptic curves in characteristic 2, {\it Algebra Number Theory} 7 (2013) 507-532.
\bibitem{3} A. Elkin, The rank of the Cartier operator on cyclic covers of the projective line, {\it J. Algebra} 327 (2011) 1-12.
  \bibitem{6} H. Friedlander, D. Garton, B. Malmskog, R. Pries, C. Weir, The a-number of Jacobians of Suzuki curves, {\it Proc. Am. Math. Soc.} 141 (2013) 3019-3028.
  \bibitem{5} S. Farnell, R. Pries, Families of Artin–Schreier curves with Cartier–Manin matrix of constant rank, {\it Linear Algebra Appl}. 439 (2013) 2158-2166.
    \bibitem{8} J. González, Hasse-Witt matrices for the Fermat curves of prime degree, {\it Tohoku Math. J.} 49 (1997) 149-163.
   \bibitem{9} D. Gorenstein, An arithmetic theory of adjoint plane curves, {\it Trans. Am. Math. Soc.} 72 (1952) 414-436.
  \bibitem{10} B.H. Gross, Group representations and lattices, {\it J. Am. Math. Soc.} 3 (1990) 929-960.
    \bibitem{13} T. Kodama, T. Washio, Hasse-Witt matrices of Fermat curves, {\it Manuscr. Math.} 60 (1988) 185-195.
        \bibitem{KW} T. Kodama and T. Washio, A family of hyperelliptic function fields with Hasse-Witt invariant zero, \emph{J. Number Theory} \textbf{36} (1990), 187--200.
 \bibitem{14} K.-Z. Li, F. Oort, Moduli of Supersingular Abelian Varieties, {\it Lecture Notes in Mathematics}, vol.1680, Springer-Verlag, Berlin, 1998, iv+116pp.
 \bibitem{maria} M. Montanucci, P. Speziali, The a-numbers of Fermat and Hurwitz curves. {\it Journal of Pure and Applied Algebra} 222 (2018) 477-488.
    \bibitem{17} R. Pries, C. Weir, The Ekedahl-Oort type of Jacobians of Hermitian curves, {\it Asian J. Math.} 19 (2015) 845-869.
  \bibitem{100} C. S. Seshadri. L’opération de Cartier. Applications. {\it In Variétés de Picard}, volume 4 of Séminaire Claude Chevalley. Secrétariat Mathématiques, Paris, 1958-1959.
      \bibitem{volch} K. Stohr, J.F. Voloch, A formula for the Cartier operator on plane algebraic curves, J. Reine Angew. Math. 377 (1987) 49-64.
          \bibitem{FF2} S. Tafazolian, A note on certain maximal hyperelliptic curves, \textit{Finite Fields Appl.} \textbf{18} (2012), 1013--1016.
\bibitem{JPAA} S.~Tafazolian, A family of maximal hyperelliptic curves,
\textit{J.\ Pure Appl.\ Algebra} \textbf{216} (2012), 1528--1532.
 \bibitem{150} M. Tsfasman, S. Vladu¸t, and D. Nogin. Algebraic geometric codes: basic notions, volume 139 of Mathematical Surveys and Monographs. American Mathematical Society, Providence, RI, 2007.
  \bibitem{V} R. Valentini, Hyperelliptic curves with zero Hasse-Witt matrix, \textit{Manuscripta Math}. \textbf{86} (1995), 185--194.
 \bibitem{Yui} N. Yui,  On the Jacobian Varieties of Hyperelliptic Curves over Fields of Characteristic p > 2. {\it J. Algebra}, 52 (1978), 378-410.

\end{thebibliography}

\end{document}